\newtheorem{lemma}{Lemma}
\newtheorem{theorem}{Theorem}
\newcommand{\ds}{\displaystyle}
\title{On the Convergence of the Variational Iteration Method for Klein-Gordon Problems with Variable Coefficients II}
\author{
{\normalsize{\sc Pavel Dr\'{a}bek}}
\thanks{ Research sponsored by the Grant Agency of the Czech Republic, grant no. 22-182615} \\
{\scriptsize Department of Mathematics and Center N.T.I.S., University of West Bohemia,
P.O. Box 314, 306 14 Pilsen}\\[-1ex]
{\scriptsize e-mail:pdrabek@kma.zcu.cz}
\and
{\normalsize{\sc Stephen B. Robinson}}\\
{\scriptsize Department of Mathematics, Wake Forest University, Winston-Salem, NC
27109}\\[-1ex]
{\scriptsize e-mail: sbr@wfu.edu}
\and
{\normalsize{\sc Shohreh Gholizadeh Siahmazgi}}\\
{\scriptsize Department of Mathematics, Wake Forest University, Winston-Salem, NC
27109}\\[-1ex]
{\scriptsize e-mail: ghols18@wfu.edu}
}
\date{ }
\begin{document}

\maketitle

\begin{abstract}
In this paper we investigate convergence for the Variational Iteration Method (VIM) which was introduced and described in \cite{He0},\cite{He1}, \cite{He2}, and \cite{He3}. We prove the convergence of the iteration scheme for a linear Klein-Gorden equation with a variable coefficient whose unique solution is known. The iteration scheme depends on a {\em Lagrange multiplier}, $\lambda(r,s)$, which is represented as a power series. We show that the VIM iteration scheme converges uniformly on compact intervals to the unique solution. We also prove convergence when $\lambda(r,s)$ is replaced by any of its partial sums. The first proof follows a familiar pattern, but the second requires a new approach. The second approach also provides some detail regarding the structure of the iterates.
\end{abstract}

\noindent Keywords: Variational Iteration Method, Klein-Gordon, Convergence

\noindent MSC: 35A,35L,35Q

\section{Introduction}

In this paper we investigate convergence for the Variational Iteration Method (VIM) which was introduced and described in \cite{He0},\cite{He1}, \cite{He2}, and \cite{He3}. We prove the convergence of the iteration scheme for a linear Klein-Gorden equation with a variable coefficient whose unique solution is known. This equation serves as a test case. The iteration scheme depends on a {\em Lagrange multiplier}, $\lambda(r,s)$, which is represented as a power series. We show that the VIM iteration scheme converges uniformly on compact intervals to the unique solution. We also prove convergence when $\lambda(r,s)$ is replaced by any of its partial sums. The first proof follows a familiar pattern, but the second requires a new approach. The second approach also provides some detail regarding the structure of the iterates.

Consider the linear Klein-Gordon problem
\begin{equation}
\begin{array}{l}
u_{rr}-u_{tt}+ru=0, r>0,-\infty<t<\infty\\
u(0,t)=e^{it},\\
u_r(0,t)=0.
\end{array}
\label{IVP}
\end{equation}
The unique solution can be found using elementary separation of variables and power series methods. It is
\begin{equation}
u(r,t)=e^{i t}\sum_{k=0}^{\infty}a_kr^k,
\label{solution}
\end{equation}
where the sum is an Airy function with $a_0=1,a_1=0,a_2=-\frac{1}{2}$, and
\begin{equation}
a_k=-\left ( \frac{a_{k-3}+a_{k-2}}{k(k-1)}\right ), k\geq 3.
\label{recursionrelation1}
\end{equation}
We will refer to these as {\em Airy coefficients} in discussions below.

The {\em Lagrange multiplier} is given by
\begin{equation}\label{lambda}
\lambda(r,s)=\sum_{k=0}^{\infty}\alpha_k (s-r)^k,
\end{equation}
where $\alpha_0=0,\alpha_1=1,\alpha_2=0$, and
\begin{equation}\label{recursionrelation2}
\alpha_k=-\frac{\alpha_{k-3}+r\alpha_{k-2}}{k(k-1)}, k\geq 3.
\end{equation}
Observe that this recursion relation is similar to \eqref{recursionrelation1}, but is not the same. We denote the partial sums as
\begin{equation}\label{lambdaN}
\lambda_N(r,s)=\sum_{k=0}^{N}\alpha_k (s-r)^k.
\end{equation}
As will be seen below $\lambda(r,s)$ can be derived via the optimization of an appropriate Lagrangian, or as a Green's Function for an initial value problem.

The standard VIM leads to an iteration scheme where $u_0(r,t)=e^{it}$  and
\begin{equation}\label{lambdaiteration}
u_{n+1}(r,t)=u_n(r,t)+\int_{0}^{r}\lambda (r,s)\left (u_{n,ss}(s,t)-u_{n,tt}(s,t)+su_{n}(s,t) \right )ds.
\end{equation}
We will also investigate the same scheme with $\lambda_N(r,s)$ replacing $\lambda(r,s)$, i.e.
\begin{equation}\label{lambdaNiteration}
u_{n+1}(r,t)=u_n(r,t)+\int_{0}^{r}\lambda_N (r,s)\left (u_{n,ss}(s,t)-u_{n,tt}(s,t)+su_{n}(s,t) \right )ds.
\end{equation}

We will prove the following theorems.
\begin{theorem}\label{convergence1}
Given any $R>0$ the sequence of iterates given by $u_0(r,t)=e^{it}$ and \eqref{lambdaiteration} converges uniformly to the solution \eqref{solution}  for $(r,t)\in [-R,R]\times\mathbb{R}$.
\end{theorem}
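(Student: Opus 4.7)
The strategy is to exploit the separated structure forced by the initial data. Since $u_0(r,t) = e^{it}$, I would first show by induction on $n$ that every iterate takes the form $u_n(r,t) = e^{it}\phi_n(r)$: substituting this ansatz into \eqref{lambdaiteration} and using $u_{n,tt} = -e^{it}\phi_n$ factors out $e^{it}$ and yields the scalar relation
\[
\phi_{n+1}(r) = \phi_n(r) + \int_0^r \lambda(r,s)\bigl[\phi_n''(s) + (1+s)\phi_n(s)\bigr]\,ds.
\]
Writing $\phi(r) := \sum_{k=0}^\infty a_k r^k$ for the exact solution $u(r,t) = e^{it}\phi(r)$, the bracket vanishes for $\phi$, so the error $\psi_n := \phi_n - \phi$ satisfies the same recursion with the same integrand. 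Moreover, matching coefficients of $(s-r)^k$ in \eqref{lambda} via \eqref{recursionrelation2} shows that $\lambda(r,s)$ is, for each fixed $r$, the unique solution in $s$ of the ODE $\partial_s^2 \lambda + s\lambda = 0$ with $\lambda(r,r) = 0$ and $\lambda_s(r,r) = 1$.

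Next I would verify, again by induction, that $\psi_n(0) = \psi_n'(0) = 0$ for all $n$. The base case holds since $\phi(0) = 1$, $\phi'(0) = 0$; the induction step evaluates the $\psi$-iteration and its $r$-derivative at $r = 0$, using Leibniz's rule together with $\lambda(r,r) = 0$ to kill the leading boundary term. With these vanishing initial values, two integrations by parts give
\[
\int_0^r \lambda(r,s)\psi_n''(s)\,ds = -\psi_n(r) + \int_0^r \lambda_{ss}(r,s)\psi_n(s)\,ds,
\]
since the boundary contribution at $s=0$ drops out and the one at $s=r$ collapses to $-\psi_n(r)$. Combining this with the $(1+s)\psi_n$ term and invoking $\lambda_{ss} + s\lambda = 0$ cancels the explicit $\psi_n(r)$ and reduces the iteration to the clean Volterra form
\[
\psi_{n+1}(r) = \int_0^r \lambda(r,s)\psi_n(s)\,ds.
\]

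A standard Volterra estimate then closes the argument. Given any $R>0$, the function $\lambda(r,s)$ — being jointly analytic as a solution of a linear ODE with polynomial coefficients whose initial data at $s=r$ depend polynomially on $r$ — is continuous and hence bounded by some $M = M(R)$ on $[-R,R]^2$. Iterating $|\psi_{n+1}(r)| \le M\int_0^{|r|} |\psi_n(s)|\,ds$ from the uniform bound $|\psi_0(r)| \le K := \max_{[-R,R]}|1 - \phi(r)|$ gives $|\psi_n(r)| \le K M^n |r|^n / n!$, which tends to $0$ uniformly on $[-R,R]$; since $|e^{it}| = 1$ for real $t$, this yields uniform convergence of $u_n$ to $u$ on $[-R,R]\times\mathbb{R}$. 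I expect the main technical obstacle to be the uniform bound on $\lambda$: one either invokes analytic-coefficient ODE theory to control $\lambda(r,s)$ over the compact rectangle, or one extracts explicit estimates on the polynomials $\alpha_k(r)$ from \eqref{recursionrelation2} strong enough to guarantee absolute convergence of the series \eqref{lambda} uniformly for $(r,s) \in [-R,R]^2$.
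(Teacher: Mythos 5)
Your proposal is correct and follows essentially the same route as the paper: reduce to the scalar error recursion, use the Green's-function property of $\lambda$ (which you derive explicitly by integration by parts, where the paper simply invokes the identity $e_n(r)+\int_0^r\lambda(r,s)(e_{n,ss}+se_n)\,ds=0$ from the vanishing initial data of $e_n$) to collapse the iteration to $\psi_{n+1}(r)=\int_0^r\lambda(r,s)\psi_n(s)\,ds$, and then close with the standard Picard-type bound $|\psi_n(r)|\le K(M|r|)^n/n!$. The only difference is presentational: you spell out the integration by parts and the induction on $\psi_n(0)=\psi_n'(0)=0$ that the paper leaves implicit.
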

\begin{theorem}\label{convergence2}
Given any $N\in\mathbb{N}$ and any $R>0$ the sequence of iterates given by $u_0(r,t)=e^{it}$ and \eqref{lambdaNiteration} converges uniformly to the solution \eqref{solution} for $(r,t)\in [-R,R]\times\mathbb{R}$.
\end{theorem}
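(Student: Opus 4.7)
My plan is to recast the iteration \eqref{lambdaNiteration} as a scalar Volterra integral equation in $r$ whose unique fixed point is the exact solution $v$ from \eqref{solution}, and then apply the standard Picard estimate for Volterra kernels.

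First, I would separate variables: because $u_0 = e^{it}$ and $(\partial_r^2 - \partial_t^2 + r)[e^{it}\phi(r)] = e^{it}\bigl(\phi''(r) + (1+r)\phi(r)\bigr)$, the ansatz $u_n(r,t) = e^{it} v_n(r)$ is preserved by \eqref{lambdaNiteration}, reducing it to
\[
v_{n+1}(r) = v_n(r) + \int_0^r \lambda_N(r,s)\bigl(v_n''(s) + (1+s)v_n(s)\bigr)\,ds, \qquad v_0 \equiv 1.
\]
A direct induction using $\lambda_N(r,r) = 0$ yields $v_n(0) = 1$ and $v_n'(0) = 0$ for every $n$, so the iteration preserves the initial data of \eqref{IVP}.

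Next, two integrations by parts in $s$, exploiting the boundary identities $\lambda_N(r,r) = \alpha_0 = 0$ and $\lambda_{N,s}(r,r) = \alpha_1 = 1$ built into the definition of the $\alpha_k$, give
\[
\int_0^r \lambda_N(r,s)\, L v_n(s)\,ds = -v_n(r) + \lambda_{N,s}(r,0)\, v_n(0) - \lambda_N(r,0)\, v_n'(0) + \int_0^r K_N(r,s)\, v_n(s)\,ds,
\]
where $L v := v'' + (1+s)v$ and $K_N(r,s) := \lambda_{N,ss}(r,s) + (1+s)\lambda_N(r,s)$ is a polynomial in $(r,s)$. Feeding in $v_n(0)=1$, $v_n'(0)=0$ collapses the iteration to the Volterra form
\[
v_{n+1}(r) = \lambda_{N,s}(r,0) + \int_0^r K_N(r,s)\, v_n(s)\,ds =: (T v_n)(r).
\]
The exact $v$ satisfies $L v \equiv 0$ (a direct consequence of \eqref{recursionrelation1}) with $v(0)=1$, $v'(0)=0$, so applying the same identity to $\int_0^r \lambda_N\, L v\,ds = 0$ shows $v = Tv$. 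Setting $M_N := \sup_{(r,s)\in[-R,R]^2}|K_N(r,s)|$, the textbook Picard estimate for Volterra kernels gives $\|T^n\phi - T^n\psi\|_{C[-R,R]} \leq (M_N R)^n/n!\,\|\phi-\psi\|_{C[-R,R]}$, so $v_n = T^n v_0 \to T^n v = v$ uniformly on $[-R,R]$. Since $|e^{it}| = 1$, this lifts to $u_n \to u$ uniformly on $[-R,R]\times\mathbb{R}$.

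The main obstacle is the insight that truncation does not destroy the fixed-point structure: although $\lambda_N$ no longer satisfies $\lambda_{ss} + s\lambda = 0$, the integration-by-parts reduction above uses only the endpoint conditions $\lambda_N(r,r)=0$, $\lambda_{N,s}(r,r)=1$, which \emph{are} preserved by truncation; and the identity $v = Tv$ relies only on $L v = 0$, which is independent of $N$. Once this modified Volterra reformulation is in hand, the convergence reduces to the standard Picard estimate applied to the polynomial kernel $K_N$.
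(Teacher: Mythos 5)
Your proof is correct, but it takes a genuinely different route from the paper's. The paper proves Theorem \ref{convergence2} by deriving an explicit recursion \eqref{recursion3} for the coefficients $a^n_m$ of the (polynomial) iterates, showing via Lemma \ref{AiryCoeffs} that the first $2n+2$ coefficients of $\phi_n$ are exact Airy coefficients, bounding the remaining coefficients by a modified-factorial estimate (Lemmas \ref{comp1} and \ref{comp2}), and concluding by comparison; indeed the paper explicitly remarks that the Picard-type argument used for Theorem \ref{convergence1} ``does not adapt well'' to $\lambda_N$ because $e_n(r)+\int_0^r\lambda_N(r,s)(e_{n,ss}+se_n)\,ds$ no longer vanishes. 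Your key observation is that it does not need to vanish: two integrations by parts use only the endpoint data $\lambda_N(r,r)=0$, $\lambda_{N,s}(r,r)=1$ (preserved by truncation for $N\geq 1$) together with $e_n(0)=e_{n,r}(0)=0$, and the nonzero residual $\lambda_{N,ss}+s\lambda_N$ is simply absorbed into the polynomial Volterra kernel $K_N=\lambda_{N,ss}+(1+s)\lambda_N$, yielding $e_{n+1}(r)=\int_0^r K_N(r,s)e_n(s)\,ds$ and hence the standard $(M_NR)^n/n!$ bound. (The integrations by parts are legitimate since the iterates are polynomials by Lemma \ref{degree}, and the estimate handles $r<0$ with $|r|$ in place of $r$.) Your route is shorter, unifies the two theorems (Theorem \ref{convergence1} is the case $K=\lambda$, since $\lambda_{ss}+s\lambda=0$), and delivers an explicit factorial convergence rate; the paper's route is longer but yields structural information the Volterra argument does not, namely that each iteration locks in at least two more exact Airy coefficients of the solution.
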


Much of the literature on the VIM provides interesting applications with compelling computational evidence for fast convergence. Rigorous proof is less common. However, papers such as \cite{KS}, \cite{O}, \cite{Sa}, \cite{ST}, \cite{S}, \cite{TD},\cite{TK}, and \cite{YX} provide detailed proofs of convergence for a wide variety of problems. A common theme in the proofs are estimates similar to those used in the Contraction Mapping Theorem or Picard Iteration. Our proof of Theorem \ref{convergence1} follows that pattern. This is not especially new, but we provide the details for the purpose of comparison. It is interesting that this standard approach to proof does not adapt well to the iteration using partial sums of $\lambda(r,s)$. Our proof of Theorem \ref{convergence2} derives a recursion relation for the coefficients of the iterates followed by an application of comparison methods. We significantly generalize the work in \cite{Si} and \cite{SiR}, where several special cases were considered.

Most of the literature dealing with the application of the VIM to initial value problems for wavelike equations, such as Klein-Gordon, assume constant coefficients. See \cite{H}, \cite{He3}, \cite{K},\cite{TD},or \cite{W} for examples including both linear and nonlinear problems on finite and infinite intervals. These papers also rely on the simplest available choice of $\lambda(r,s)$ associated with the problem. In the following section we will see that VIM allows some flexibility in the choice of $\lambda(r,s)$.

In section 2 we briefly review several perspectives on the formal derivation of the VIM and its associated Lagrange multiplier. Section 3 provides  several useful formulae describing $\lambda(r,s)$ and the coefficients of the VIM iterates for \eqref{lambdaNiteration}. Section 4 provides the proofs of Theorems \ref{convergence1} and \ref{convergence2}. Section 5 suggests a few directions for further research.

\section{Formal derivation of the VIM}

In this section we briefly review ideas described in \cite{ISM}, \cite{He0}, and \cite{SiR}. The first paper describes a generalized Newton's method that is credited by several authors as motivation for the VIM.  The second provides a common derivation of the VIM iteration scheme which modifies the generalized Newton's method by introducing the idea of a {\em restricted variation}. Further examples and explanation of the VIM can be found in \cite{He1}, \cite{He2}, and \cite{He3}. The third provides an alternative derivation  of the VIM where the Lagrange multiplier $\lambda(r,s)$ is viewed as a Green's function. We emphasize that these derivations are formal and do not provide details of the relevant function spaces, norms, etc.

\subsection{A generalized Newton's Method}

Represent a differential equation and its auxiliary condition(s) abstractly as
\[
F[u]=0.
\]
Assume that we are looking for solutions $u$ in a Banach space with duality pairing $\langle \cdot,\cdot\rangle$.
Represent a quantity associated with the solution as $q=Q[u]$. ( One choice is the solution itself, i.e. $Q[u]=u$, but the discussion in \cite{ISM} allows greater generality.)
Let $u_0$ represent a first approximation of the solution and let $u$ represent the actual solution which is unknown. We want to estimate
\[
q=Q[u].
\]
We have $q_0=Q[u_0]$ but want an improved estimate $q_1$.
Write $u_0=u+\delta u$, where $\delta u$ represents a variation or test function,  and linearize $q=Q[u]-\langle \lambda_0, F[u]\rangle$ at $u$, so that
\[
\begin{array}{lll}
q&\approx & Q[u]+\langle DQ[u],\delta u\rangle -\langle \lambda_0,DF[u]\cdot\delta u\rangle \\
&= &  Q[u]+\langle DQ[u],\delta u\rangle -\langle \left (DF[u] \right )^* \cdot\lambda_0,\delta u\rangle \\
&= &  Q[u]+\langle DQ[u] -\left (DF[u] \right )^* \cdot\lambda_0,\delta u\rangle,
\end{array}
\]
where the derivatives represent Frechet derivatives, and $\left ( DF[u] \right )^*$ is the adjoint operator associated with $DF[u]$.
The {\em optimal} $\lambda_0$ will satisfy the linear equation
\[
DQ[u] -\left ( DF[u] \right )^*\lambda_0=0.
\]
Since $u$ is unknown we solve for $\lambda_0$ using
\[
DQ[u_0] -\left ( DF[u_0] \right )^*\lambda_0=0.
\]
This leads to
\[
q_1=Q[u_0]-\langle \lambda_0, F[u_0]\rangle=q_0-\langle \lambda_0, F[u_0]\rangle.
\]

In \cite{ISM} the process above described a single step towards an improved estimate of $q$, but this process can clearly be adapted to an iteration scheme. Each additional approximation step would require an updated $n$th approximate solution, $u_n$, and an updated $\lambda_n$.

\subsection{The standard derivation of the VIM}

In \cite{He0} problems of the following type are considered.
\[
\begin{array}{c}
Lu + N(u)=0,\\
A(u)=g
\end{array}
\]
where $L$ is a linear differential operator in one variable, $N$ may include differential, linear, and/or nonlinear terms, $A(u)=g$ represents a linear auxiliary condition, and $u$ is a smooth classical solution. For \eqref{IVP} we assume $u(r,t)$ is analytic, $Lu=u_{rr}+ru$, $N(u)=-u_{tt}$, $A(u)=(u(0,t),u_r(0,t))$, and $g=(e^{it},0)$.

Let $u_0(r,t)$, satisfying $A(u_0)=g$, be the initial iterate and let
\[
u_1=u_0+\int_{0}^{r}\lambda (r,s)\left ( Lu_0+N(u_0)\right )ds.
\]
It is straight forward to verify that $A(u_1)=g$. Similarly to the previous section we now want to determine an optimal $\lambda$.
Let $u_0=u_1+\delta u$, where $A(\delta u)=0$, and apply a {\em restricted variation}  of $N$, i.e.
\[
u_1=u_1+\delta u+\int_{0}^{r}\lambda (r,s)\left ( L(u_1+\delta u)+N(u_1)\right )ds
\]
It follows that
\[
0=\delta u+\int_{0}^{r}\lambda \left ( L\delta u\right)ds+\int_{0}^t\lambda \left (L(u_1)+N(u_1)\right )ds.
\]
We want $Lu_1+N(u_1)\approx 0$, so the {\em optimal } choice for $\lambda$ satisfies
\begin{equation}\label{InverseEquality}
0=\delta u+\int_{0}^{r}\lambda \left (L \delta u\right)ds
\end{equation}
for all variations $\delta u$ satisfying $A(\delta u)=0$. Using judicious choices of $\delta u$ and integration by parts, we can derive a linear ODE for $\lambda$.  We solve for $\lambda$ and then iterate using
\[
u_{n+1}=u_n+\int_{0}^{r}\lambda (r,s)\left ( Lu_n+N(u_n)\right )ds.
\]

Two observations are interesting here. First, there may be multiple options for the choice of $L$, and subsequently $N$, in a single problem. In our problem we could choose $Lu=u_{rr}, Lu=u_{rr}+ru,$ or $Lu=-u_{tt}$. In the literature the first choice is the most common, leads to a simple $\lambda(r,s)$, and produces good computational results for many examples. Second, $\lambda(r,s)$ does not need to be updated with each iteration as would be necessary with the generalized Newton's method.

\subsection{An alternative derivation of the VIM}

Start with the same problem statement as in the previous section. Choose $u_0$ such that $A(u_0)=g$. Solve for $u_1=u_0+\delta u$ such that
  \[
  \begin{array}{c}
    Lu_1+N(u_0)=0, \\
    A(u_1)=g
    \end{array}
    \]
which can be restated as
    \[
  \begin{array}{c}
    L(\delta u)+(L(u_0)+N(u_0))=0, \\
    A(\delta u)=0
    \end{array}
    \]
Let $L^{-1}$ be the solution operator for
  \[
  \begin{array}{c}
    Lu+f=0, \\
    A(u)=0.
  \end{array}
  \]
Thus $\delta u=-L^{-1}\left (Lu_0+N(u_0) \right )$, and
$u_1=u_0-L^{-1}\left (Lu_0+N(u_0) \right ).$

The formal argument above leaves us with the task of solving for $L^{-1}$. A well-understood approach is to find a Green's function $\lambda(r,s)$ such that
\[
-(L^{-1}f)(r)=\int_{0}^{r}\lambda(r,s)\, f(s)\,ds.
\]
If $\delta u$ is a test function satisfying $A(\delta u)=0$, then $\delta u=L^{-1}L\delta u$ so
\[
\delta u=-\int_{0}^{r}\lambda(r,s)\,L\delta u \,ds.
\]
One can use well chosen test functions and integration-by-parts to derive an initial value problem for an ODE which can then be solved for $\lambda (r,s)$. Once the Green's function $\lambda(r,s)$ has been determined, then we have the iteration scheme
\[
u_{n+1}=u_n-L^{-1}\left ( Lu_n+N(u_n)\right )=u_n+\int_{0}^{r}\lambda(r,s)\left (Lu_n+N(u_n) \right )ds.
\]

\section{Applying the VIM to the test problem}

In this section we apply the VIM to our test problem \eqref{IVP} to produce a sequence of approximate solutions.
Let $u_0=e^{it}$, which satisfies the initial conditions, and let
\[
u_{n+1}=u_n+\int_0^r \lambda(r,s)\left (u_{n,ss}-u_{n,tt}+su_n \right )ds.
\]
For convenience we substitute $u_n=e^{it}\phi_n(r)$ into the iteration scheme and then divide out the exponential to get
\[
\phi_{n+1}=\phi_n+\int_0^r \lambda(r,s)\left (\phi_{n,ss}+s\phi_n+\phi_n \right )ds.
\]

\subsection{Solving for $\lambda$}

We solve for $\lambda$ by considering
\[
\delta u(r)=-\int_{0}^{^r}\lambda (r,s)(\delta u_{ss}(s)+s\delta u(s))ds
\]
for all test functions $\delta u (r)$, i.e. smooth functions $\delta u(r)$ such that $\delta u(0)=1$ and $\delta u_r(0)=0$.
Applying standard integration by parts methods we derive
\[
\begin{array}{c}
  \lambda_{ss}(r,s)+s\lambda(r,s)=0, \\
  \lambda_s(r,r)=1,\mbox{ and}\\
  \lambda(r,r)=0.
\end{array}
\]
This has the classic solution
\[
\begin{array}{lll}
\lambda (r,s)&=&\sum_{k=0}^{\infty}\alpha_{k}(s-r)^{k}\\ \\
&=&(s-r)-\frac{r}{6}(s-r)^{3}-\frac{1}{12}(s-r)^4+\frac{r^2}{120}(s-r)^5+\cdots,
\end{array}
\]
where $\alpha_0=0,\alpha_1=1,\alpha_2=0,$ and
\[
\alpha_k=-\frac{\alpha_{k-3}+r\alpha_{k-2}}{k(k-1)},k\geq 3.
\]
Notice that this is similar to, but not the same as, the recursion formula for the solution. Further observe that the coefficients are polynomials in $r$ where the degree of $\alpha_k$ is no larger than $k$, so we can write
\[
\alpha_k(r)=\sum_{j=0}^{k}\alpha_{kj}r^j,
\]
where $\alpha_{kj}\in\mathbb{R}$.
Let the $N$th partial sum of $\lambda$ be $\lambda_N$.

\subsection{Computing the necessary integrals}

Given the fact that $\phi_n(s)$ will be a power series in $s$, and that $\lambda(r,s)$ is a power series in $(s-r)$ with coefficients that are polynomials in $r$, computing $\phi_{n+1}(r)$ involves repeated computation of a well-known integral expression
\[
\int_{0}^{r}(s-r)^ms^nds=(-1)^m(m!)\frac{r^{m+n+1}}{(n+1)\cdots (m+n+1)}.
\]
For the special case where $r=1$ we get
\[
\int_{0}^{1}(s-1)^ms^nds=(-1)^m\frac{m!}{(n+1)\cdots (m+n+1)}.
\]
This motivates a definition of the {\em Beta function}
\[
\begin{array}{lll}
  B(m,n) & := & \frac{(m-1)!}{(n)\cdots (m+n-1)} \\ \\
   & = & \frac{(m-1)!(n-1)!}{(m+n-1)!}.
\end{array}
\]
One can now verify several useful results such as the formula
\[
\int_{0}^{r}(s-r)^m s^n ds=(-1)^m B(m+1,n+1)r^{m+n+1},
\]
the symmetry property
\[B(n,m)=B(m,n),\] and the recursion formula
\begin{equation}\label{Betarecursion}
B(m,n+1)=B(n,m)\frac{n}{m+n}.
\end{equation}
These formulae and further information about Beta Functions can be found in \cite{D}.

\subsection{Computing iterations using $\lambda_N$ in place of $\lambda$}

Let
\[
\phi_n(r)=\sum_{m=0}^{\infty}a^n_mr^m.
\]
It is convenient to write this as an infinite sum even though the iterates will actually be finite sums.
We substitute into
\[
\phi_{n+1}(r)=\phi_n(r)+\int_{0}^{r}\lambda_N(r,s)\left (\phi_{n,ss}(s)+s\phi_n(s) + \phi_n(s) \right )ds
\]
to get
\[
\phi_{n+1}(r)=\sum_{m=0}^{\infty}a^{n+1}_mr^m.
\]
We can then determine a recursion relation between $(a_m^{n+1})$ and $(a_m^n)$.

A first step is to apply the iteration formula just to $\rho_0=r^m$ to get
\[
\begin{array}{lll}
\rho_1(r)&=&\ds \rho_0(r)+\int_{0}^{r}\lambda_N(s,r)\left (\rho_{0,ss}(s)+s\rho_0(s)+\rho_0(s)\right) ds\\
&=&\ds r^m+\int_{0}^{r}\left (\sum_{k=1}^{N}\alpha_k(s-r)^k\right )\left (m(m-1)s^{m-2}+s^m+s^{m+1}\right )ds\\
&=& \ds r^m+\sum_{k=1}^{N}\alpha_k\int_{0}^{r}(s-r)^k \left ( m(m-1)s^{m-2}+s^m+s^{m+1} \right )ds \\ \\
&& \mbox{using Beta functions this becomes} \\ \\
&=& \ds r^m+\sum_{k=1}^{N}(-1)^k\alpha_k \Big ( m(m-1)B(k+1,m-1)r^{k+m-1}+B(k+1,m+1)r^{k+m+1}\\  && +B(k+1,m+2)r^{k+m+2} \Big )\\ \\
&& \mbox{separating out the first terms and using}\, \alpha_1=1\, \mbox{and}\, \alpha_2=0 \\ \\
&=& \ds r^m -\left (m(m-1)B(2,m-1)r^m+B(2,m+1)r^{m+2}+B(2,m+2)r^{m+3}\right )\\  && \ds +\sum_{k=3}^{N}(-1)^k\alpha_k\Big (m(m-1)B(k+1,m-1)r^{k+m-1}+B(k+1,m+1)r^{k+m+1}
\\ &&+B(k+1,m+2)r^{k+m+2}\Big )\\ \\
&&\mbox{simplifying } m(m-1)B(2,m-1)=1\\ \\
&=& \ds r^m -\left (r^m+B(2,m+1)r^{m+2}+B(2,m+2)r^{m+3}\right )\\  && \ds +\sum_{k=3}^{N}(-1)^k\alpha_k\Big (m(m-1)B(k+1,m-1)r^{k+m-1}+B(k+1,m+1)r^{k+m+1}
\\ &&+B(k+1,m+2)r^{k+m+2} \Big )\\
&=&\ds -\left (B(2,m+1)r^{m+2}+B(2,m+2)r^{m+3}\right )\\  && \ds +\sum_{k=3}^{N}(-1)^k\alpha_k\Big (m(m-1)B(k+1,m-1)r^{k+m-1}+B(k+1,m+1)r^{k+m+1}
\\  &&+B(k+1,m+2)r^{k+m+2}  \Big )\\ \\
&& \mbox{expanding the polynomial coefficients }\alpha_k(r) \\ \\
&=&\ds -\left (B(2,m+1)r^{m+2}+B(2,m+2)r^{m+3}\right )\\  && \ds +\sum_{k=3}^{N}(-1)^k\sum_{j=0}^{k}\alpha_{kj}\Big (m(m-1)B(k+1,m-1)r^{k+m+j-1}+B(k+1,m+1)r^{k+m+j+1}
\\  &&+B(k+1,m+2)r^{k+m+j+2} \Big )
\end{array}
\]
Summarizing, we have
\begin{equation}\label{rm}
    \begin{split}
        \rho_1(r) &=-\left (B(2,m+1)r^{m+2}+B(2,m+2)r^{m+3}\right ) + \\
        &\sum_{k=3}^{N}(-1)^k\sum_{j=0}^{k}\alpha_{kj}\Big ( m(m-1)B(k+1,m-1)r^{k+m+j-1}+B(k+1,m+1)r^{k+m+j+1}
        \\ &\,\,\,\,\,\,\,\,\,\,\,\,\,\,\,\,\,\,\,\,\,\,\,\,\,\,\,\, +B(k+1,m+2)r^{k+m+j+2} \Big )
    \end{split}
\end{equation}

This formula can be used to predict the highest power of $r$ appearing in $(n+1)$st iterate given the highest power in the $n$th iterate. For example, if the highest power in the $n$th iterate is $M$, then the highest power in the $(n+1)$st iterate is at most $M+2N+2$. This uses the fact that $\alpha_N$ may include at most an $N$th power of $r$.

We can also apply the formula above to determine how coefficients from the $(n+1)$st iteration depend on the coefficients of the $n$th iteration. Observe that the term $a_{m*}^n r^{m^*}$ in $\phi_n$ contributes to $a_m^{n+1}r^m$ in $\phi_{n+1}$ if $m^*=m-2,m-3, m-k-j+1,m-k-j-1,$ or $m-k-j-2$. Therefore
\[
\begin{array}{lll}
a_m^{n+1}&=&\ds \ds -\left (B(2,m-1)a^n_{m-2}+B(2,m-1)a^n_{m-3}\right ) \\  &&\ds +\sum_{k=3}^{N}(-1)^k\sum_{j=0}^{k}\alpha_{kj}\Big ((m-j-k+1)(m-j-k)B(k+1,m-j-k)a^n_{m-j-k+1}
\\  && \ds+B(k+1,m-k-j)a^n_{m-j-k-1} +B(k+1,m-j-k)a^n_{m-j-k-2} \Big ) \\ \\
&=&\ds -\left ( a^n_{m-2}+a^n_{m-3}\right )B(2,m-1) \\  &&\ds +\sum_{k=3}^{N}(-1)^k\sum_{j=0}^{k}\alpha_{kj}\Big ((m-j-k+1)(m-j-k)a^n_{m-j-k+1}
\\  && \ds+a^n_{m-j-k-1} +a^n_{m-j-k-2}\Big )B(k+1,m-k-j) \\ \\
&=&\ds -\left (\frac{ a^n_{m-2}+a^n_{m-3}}{m(m-1)}\right )\\  &&\ds +\sum_{k=3}^{N}(-1)^k\sum_{j=0}^{k}\alpha_{kj}\Big (a^n_{m-j-k+1}
\\  && \ds+\frac{a^n_{m-j-k-1} +a^n_{m-j-k-2}}{(m-j-k+1)(m-j-k)} \Big )(m-j-k+1)(m-j-k)B(k+1,m-k-j)
\end{array}
\]
A helpful simplification using \eqref{Betarecursion} is the following
\[
\begin{array}{ll}
  (m-j-k+1)(m-j-k)B(k+1,m-k-j)&  =  \\
  (m-j-k+1)(m-j+1)B(k+1,m-k-j+1)&= \\
   (m-j+2)(m-j+1)B(k+1,m-k-j+2)&=\\
(m-j+1)(m-j+2)\frac{k!}{(m-j+2)\cdots (m-k-j+2)}&= \\
\frac{k!}{(m-j)(m-j-1)\cdots (m-k-j+2)}&
\end{array}
\]
Thus we have
\begin{equation}\label{recursion3}
\begin{array}{l}
  a_m^{n+1}  =  \ds -\left (\frac{ a^n_{m-2}+a^n_{m-3}}{m(m-1)}\right )+\\  \ds \sum_{k=3}^{N}(-1)^k\sum_{j=0}^{k}\alpha_{kj}\Big (a^n_{m-j-k+1}
 \ds+\frac{a^n_{m-j-k-1} +a^n_{m-j-k-2}}{(m-j-k+1)(m-j-k)} \Big )\frac{k!}{(m-j)\cdots (m-k-j+2)}
\end{array}
\end{equation}
Observe that if $a^n_{0},...,a^n_{m-2}$ are Airy coefficients, then
\[
\left (a^n_{m-j-k+1}+\frac{a^n_{m-j-k-1} +a^n_{m-j-k-2}}{(m-j-k+1)(m-j-k)} \right )=0,
\]
for the given ranges of $j$ and $k$, so
\[
a_m^{n+1}  =  -\left (\frac{ a^n_{m-2}+a^n_{m-3}}{m(m-1)}\right ).
\]
Thus
$a^{n+1}_{0},...,a^{n+1}_m$ are Airy coefficients. We collect the above observations in the following lemmas.

\begin{lemma}\label{AiryCoeffs}
If $m\leq 2n+2$, then $(a_m^n)$ is an Airy coefficient.
\end{lemma}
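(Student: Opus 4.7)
The plan is to prove the lemma by induction on $n$, leveraging the recursion \eqref{recursion3} together with the observation recorded in the paragraph immediately preceding the lemma's statement. For the base case ($n=0$), the initial iterate $\phi_0 \equiv 1$ gives $a^0_0 = 1 = a_0$ and $a^0_m = 0$ for $m \geq 1$, which pins down the initial coefficients in the permitted range; if necessary one can compute $\phi_1$ directly from \eqref{lambdaNiteration} to seed the induction at $n=1$.

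For the inductive step, assume $a^n_m = a_m$ for all $m \leq 2n+2$, and fix $m \leq 2(n+1)+2$. The first term in \eqref{recursion3} is $-(a^n_{m-2}+a^n_{m-3})/(m(m-1))$; since $m-3,\,m-2 \leq 2n+2$, the inductive hypothesis rewrites this as $-(a_{m-2}+a_{m-3})/(m(m-1))$, which by the Airy recursion \eqref{recursionrelation1} equals $a_m$. Everything then hinges on showing that the double sum in \eqref{recursion3} contributes zero. Writing $i := m-j-k+1$ for a generic $(k,j)$ with $k \geq 3$ and $0 \leq j \leq k$, one has $i \leq m-2 \leq 2n+2$, so $a^n_i,\,a^n_{i-2},\,a^n_{i-3}$ all lie within the inductive range; for $i \geq 3$ the Airy recursion delivers
\[
a^n_i + \frac{a^n_{i-2}+a^n_{i-3}}{i(i-1)} = 0,
\]
annihilating the corresponding summand.

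The main technical obstacle is careful bookkeeping at the small-$i$ boundary, since for $i \in \{0,1,2\}$ the simplified bracket above has a formal $0/0$. I would handle this by reverting to the unsimplified form of the recursion (the display immediately preceding \eqref{recursion3}), in which the prefactor $(m-j-k+1)(m-j-k)$ multiplies $a^n_{m-j-k+1}$ rather than being absorbed into a denominator. In that form the $i=0$ and $i=1$ contributions vanish because of the vanishing prefactor together with the convention $a^n_k := 0$ for negative $k$, while the $i=2$ contribution vanishes because $2 a^n_2 + a^n_0 + a^n_{-1} = 2(-\tfrac{1}{2}) + 1 + 0 = 0$ whenever $a^n_0,\,a^n_1,\,a^n_2$ agree with the Airy values. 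Once these edge cases are dispatched, the double sum collapses to zero term-by-term, giving $a^{n+1}_m = a_m$ and closing the induction.
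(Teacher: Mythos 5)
Your proof is correct and follows essentially the same route as the paper's: induction on $n$ via the recursion \eqref{recursion3}, with the first term reproducing the Airy recursion and the double sum annihilated term-by-term by the inductive hypothesis. You are in fact more careful than the paper's very terse proof about the $i\in\{0,1,2\}$ boundary terms and about the base case, where the stated bound $m\le 2n+2$ is actually off by one (for $n=0$ it would require $a^0_2=0$ to equal $-\tfrac12$); the argument as you and the paper both run it really yields $m\le 2n+1$, which is all that is used later.
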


\begin{proof}
We know that the first two coefficients of $\phi_0=1$, i.e. $a_0=1,$ and $a_1=0$, are Airy coefficients. We have shown above that the number of Airy coefficients increases by at least two with each iteration.
\end{proof}

\begin{lemma}\label{degree}
$\phi_n$ is a polynomial of degree at most $(2N+2)n$.
\end{lemma}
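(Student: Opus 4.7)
The proof is a direct induction on $n$, building on the degree observation already recorded immediately after formula \eqref{rm}. The plan is as follows.

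First, I would establish the base case: $\phi_0(r)=1$ has degree $0 = (2N+2)\cdot 0$, so the bound holds for $n=0$. For the inductive step, assume $\phi_n$ is a polynomial of degree at most $(2N+2)n$, so that $\phi_n(r)=\sum_{m=0}^{(2N+2)n} a_m^n r^m$. By linearity of the iteration map defined by \eqref{lambdaNiteration}, it suffices to control the degree produced when the formula is applied to a single monomial $r^m$; this is exactly the content of \eqref{rm}.

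Next, I would inspect \eqref{rm} to identify the monomial of largest degree that can possibly appear. The terms $B(2,m+1)r^{m+2}$ and $B(2,m+2)r^{m+3}$ clearly have degree at most $m+3$. In the double sum, the exponents appearing are $k+m+j-1$, $k+m+j+1$, and $k+m+j+2$, with $3\le k\le N$ and $0\le j\le k$. The maximum of these is achieved for $k=N$, $j=k=N$, yielding the monomial $r^{m+2N+2}$. Hence applying one iteration to $r^m$ produces a polynomial of degree at most $m+2N+2$.

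Combining the two observations, every monomial $a_m^n r^m$ with $m\le (2N+2)n$ contributes to $\phi_{n+1}$ a polynomial of degree at most $m+2N+2\le (2N+2)n+(2N+2)=(2N+2)(n+1)$, and $\phi_{n+1}$ is a finite sum of such contributions. Therefore $\deg \phi_{n+1}\le (2N+2)(n+1)$, closing the induction. There is no real obstacle here; the only thing to be careful about is confirming that the $j=k=N$ term does in fact appear as an exponent of the form $k+m+j+2$, which is immediate from \eqref{rm}.
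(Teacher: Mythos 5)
Your proof is correct and follows essentially the same route as the paper: the paper's own proof is a one-line induction citing the observation after \eqref{rm} that each iteration raises the degree by at most $2N+2$ (because $\alpha_k$ has degree at most $k\le N$ in $r$), which is exactly the bookkeeping you carry out explicitly. You have simply filled in the details of the base case and the identification of the maximal exponent $k+m+j+2$ at $k=j=N$.
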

\begin{proof}
$\phi_0$ is a polynomial of degree $0$ and we have shown above that the degree of each iterate increases by at most $2N+2$.
\end{proof}

\section{Proof of convergence}

\subsection{Proof of Theorem \ref{convergence1}}

In this section we prove convergence for the standard VIM iteration using $\lambda$, i.e. we assume $\phi_0\equiv 1$ and
\[
\phi_{n+1}(r)=\phi_n(r)+\int_{0}^{r}\lambda(r,s)\left ( \phi_{n,ss}(s)+s\phi_n(s)+\phi_n(s)\right ) ds.
\]
We apply an argument similar to those in \cite{KS}, \cite{O}, \cite{Sa}, \cite{ST}, \cite{S}, \cite{TD},\cite{TK}, and \cite{YX}.
We assume $\phi$ is the exact solution satisfying
\[
\phi (r)=\phi (r)+\int_{0}^{r}\lambda(r,s)\left ( \phi (s)+s\phi (s)+\phi(s)\right ) ds.
\]
Let $e_n(r)=\phi_n(r)-\phi(r)$. Then
\[
\begin{array}{lll}
  e_{n+1}(r) & = & e_n(r)+\int_{0}^{r} \lambda(r,s)\left ( e_{n,ss}(s)+se_n(s)+e_n(s)\right )ds\\ \\
   & = & \int_{0}^{r} \lambda(r,s)e_n(s) ds,
\end{array}
\]
where we have used the fact that since $e_n(0)=0$ and $e_{n,r}(0)=0$ we know
\[
e_n(r)+\int_{0}^{r} \lambda(r,s)\left ( e_{n,ss}(s)+se_n(s)\right )ds=0.
\]
Let $M>0$ such that $|\lambda(s,r)|\leq M$ for all $0\leq s\leq r\leq R$. Then successive integrals yield
\[
\begin{array}{lll}
  |e_1(r)| & \leq  & ||\phi_0-\phi||_{\infty}Mr,\\
  |e_2(r)| & \leq  & ||\phi_0-\phi||_{\infty}\frac{(Mr)^2}{2},\\
   & \hdots &  \\
 |e_n(r)| & \leq  & ||\phi_0-\phi||_{\infty}\frac{(Mr)^n}{n!}.
\end{array}
\]
Uniform convergence follows for $|r|\leq R$.

This approach does not adapt well to a proof of convergence when $\lambda$ is replaced by $\lambda_N$, because it is no longer necessarily the case that
\[
e_n(r)+\int_{0}^{r} \lambda_N(r,s)\left ( e_{n,ss}(s)+se_n(s)\right )ds=0.
\]
It is worth noting that although using $\lambda$ is relatively simple to work with theoretically, it might be computationally expensive relative to using $\lambda_N$.

\subsection{Proof of Theorem \ref{convergence2}}

The argument for convergence is based on the following inequality.
\begin{lemma}\label{inequality}
There is a constant $C>0$ such that for every $m>2N+2$ and every $n\in N$ we have
\[
|a_m^{n+1}|\leq \frac{C}{(m-N)(m-N-1)}\max\{|a^n_{m-2}|,...,|a^n_{m-(2N+2)}|\}.
\]
\end{lemma}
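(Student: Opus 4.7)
The plan is to extract the bound directly from the explicit recursion \eqref{recursion3} by a term-by-term triangle-inequality estimate. The essential observation is that every coefficient $a^n_\star$ appearing on the right-hand side has index in the window $[m-(2N+2),m-2]$: in the double sum we have $3\leq k\leq N$ and $0\leq j\leq k$, hence $j+k\leq 2N$, and the three index families $m-j-k+1$, $m-j-k-1$, $m-j-k-2$ together with the first-term indices $m-2,m-3$ precisely fill this window. Pulling $M_n(m):=\max\{|a^n_{m-2}|,\dots,|a^n_{m-(2N+2)}|\}$ out of each summand, it remains to show that the sum of the remaining (coefficient-independent) factors is at most $C/[(m-N)(m-N-1)]$.

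The first term contributes at most $2M_n(m)/[m(m-1)]\leq 2M_n(m)/[(m-N)(m-N-1)]$, since $m(m-1)\geq(m-N)(m-N-1)$ for $m\geq N+1$. For the double sum, $|\alpha_{kj}|\leq A_N:=\max_{3\leq k\leq N,\,0\leq j\leq k}|\alpha_{kj}|<\infty$, and the bracket $1+2/[(m-j-k+1)(m-j-k)]$ is uniformly bounded (by $3$, say), because $(m-j-k+1)(m-j-k)\geq(m-2N+1)(m-2N)\geq 12$ when $m>2N+2$. The core analytic step is the uniform estimate
\[
\frac{k!}{(m-j)(m-j-1)\cdots(m-k-j+2)}\leq\frac{C'_N}{(m-N)(m-N-1)}
\]
for $3\leq k\leq N$, $0\leq j\leq k$, and $m>2N+2$. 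The denominator is a product of $k-1\geq 2$ consecutive integers, each at least $m-2N+2\geq 5$, so the denominator is at least $(m-2N+2)^{k-1}\geq(m-2N+2)^2$. Substituting $x=m-2N+2$, a short elementary calculation shows that $x^2/[(x+N-2)(x+N-3)]$ tends to $1$ as $x\to\infty$ and attains a positive infimum $c_N$ on $x\geq 5$, giving $C'_N=N!/c_N$.

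Combining these estimates, every term on the right of \eqref{recursion3} is bounded by an $N$-dependent constant times $M_n(m)/[(m-N)(m-N-1)]$, and the finite double sum absorbs into a single constant $C=C(N)$ that is independent of both $m$ and $n$. I do not anticipate a serious obstacle; the only step requiring any care is the uniform comparison between $(m-2N+2)^2$ and $(m-N)(m-N-1)$ for integer $m\geq 2N+3$. Everything else is elementary bookkeeping on the explicit recursion derived in the preceding section, together with the fact that $\alpha_{kj}$ is a fixed finite collection of constants once $N$ is fixed.
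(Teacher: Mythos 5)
Your proposal is correct and takes essentially the same route as the paper: a term-by-term triangle-inequality estimate on \eqref{recursion3}, pulling out $\max\{|a^n_{m-2}|,\dots,|a^n_{m-(2N+2)}|\}$, bounding the bracket $1+2/[(m-j-k+1)(m-j-k)]$ by a uniform constant, and bounding $k!/[(m-j)\cdots(m-k-j+2)]$ by a constant times $1/[(m-N)(m-N-1)]$. The only (cosmetic) difference is that the paper gets the last bound directly from the two largest factors, $(m-j)(m-j-1)\geq (m-N)(m-N-1)$ with the remaining factors at least $1$, which avoids your infimum computation in $x=m-2N+2$.
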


\begin{proof}
  From \eqref{recursion3} we have
  \[
  |a_m^{n+1}|\leq \left (1+2k!\sum_{k=3}^{N}\sum_{j=0}^{k}|\alpha_{kj}|\right )\frac{1}{(m-N)(m-N-1)}\max\{|a^n_{m-2}|,...,|a^n_{m-(2N+2)}|\},
  \]
  where we have used
  \[
  \left (1+\frac{2}{(m-j-k+1)(m-j-k)}\right )\leq 2
  \]
 and
  \[\frac{1}{(m-j)(m-j-1)\cdots (m-k-j+2)}\leq \frac{1}{(m-N)(m-N+1)}\] for all $j,k$ in the given ranges.

\end{proof}

\begin{lemma}
Given any $\mu\in\mathbb{N}$ the set $\{|a_m^n|:m\leq\mu,n\in\mathbb{N}\}$ is bounded
\end{lemma}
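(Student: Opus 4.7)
The plan is to appeal directly to Lemma~\ref{AiryCoeffs}, which already gives us almost everything we need. Fix $\mu \in \mathbb{N}$ and any index $m$ with $m \leq \mu$. First I would invoke Lemma~\ref{AiryCoeffs} to conclude that whenever $n$ is sufficiently large (say $n \geq \lceil (m-2)/2 \rceil$, so that $2n+2 \geq m$), the iterate coefficient $a_m^n$ coincides with the Airy coefficient $a_m$ defined by \eqref{recursionrelation1}. Since $a_m$ is a single fixed real number independent of $n$, this means that for each fixed $m \leq \mu$ the sequence $(a_m^n)_{n \in \mathbb{N}}$ is \emph{eventually constant}: it assumes only finitely many distinct values, namely the initial transients $a_m^0, a_m^1, \ldots, a_m^{n_0(m)-1}$ (with $n_0(m) := \lceil (m-2)/2 \rceil$) together with the stable value $a_m$ for $n \geq n_0(m)$.

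Next I would apply this observation across the finitely many indices $m \in \{0, 1, \ldots, \mu\}$. The full set $\{|a_m^n| : m \leq \mu,\, n \in \mathbb{N}\}$ is then a finite union of finite sets of real numbers, hence finite, hence bounded. An explicit bound is
\[
B_\mu \;=\; \max\Bigl( \max\{|a_m^n| : 0 \leq m \leq \mu,\ 0 \leq n < n_0(m)\},\; \max\{|a_m| : 0 \leq m \leq \mu\} \Bigr),
\]
where each inner maximum is taken over finitely many real numbers.

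There is essentially no obstacle here: the lemma is an almost immediate consequence of Lemma~\ref{AiryCoeffs}, and neither the recursion \eqref{recursion3} nor the estimate of Lemma~\ref{inequality} needs to be invoked. The real difficulty, I expect, will come in the next step of the convergence argument, where one must control $|a_m^n|$ for \emph{large} $m$ uniformly in $n$. There the decay factor $1/[(m-N)(m-N-1)]$ furnished by Lemma~\ref{inequality} will be the key mechanism for propagating the boundedness established here at small indices outward to all $m$, and combining it with Lemma~\ref{degree} to obtain the uniform convergence on $[-R,R]\times\mathbb{R}$ asserted in Theorem~\ref{convergence2}.
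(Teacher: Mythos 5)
Your proposal is correct and follows essentially the same route as the paper: both split the coefficients into those that have already stabilized to Airy coefficients (via Lemma~\ref{AiryCoeffs}) and a finite transient collection. The only cosmetic difference is that you organize the argument per fixed $m$ (eventually constant sequences), whereas the paper splits the index set globally into $\{m\leq 2n\}$ and $\{2n\leq m\leq\mu\}$; your version even avoids needing boundedness of the full infinite family of Airy coefficients, using only finitely many of them.
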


\begin{proof}
By Lemma \ref{AiryCoeffs} the set $\{|a_m^n|:m\leq 2n\}$ is a subset of the classic Airy coefficients and is therefore bounded. The set $\{|a_m^n|:2n\leq m\leq\mu\}$ is finite and therefore bounded. The result follows.
\end{proof}

\begin{lemma}
$\{a_n^m:n,m\in \mathbb{N}\cup\{0\}\}$ is bounded.
\end{lemma}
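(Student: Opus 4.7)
The plan is to combine the decay factor in Lemma \ref{inequality} with the boundedness supplied by the previous lemma, using induction on $n$. The key observation is that the prefactor $C/((m-N)(m-N-1))$ in Lemma \ref{inequality} is at most $1$ once $m$ is sufficiently large, so the recursion is non-amplifying in the tail of the index $m$, while the remaining bounded range of $m$ is already controlled by the previous lemma.

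Concretely, I would first pick an integer $M_0 \geq 2N+3$ large enough that $C/((m-N)(m-N-1)) \leq 1$ for every $m \geq M_0$; for instance $M_0 = \max\{2N+3,\, N+1+\lceil\sqrt{C}\,\rceil\}$. Then the previous lemma with $\mu = M_0$ supplies a constant $K \geq 1$ such that $|a_m^n| \leq K$ for all $m \leq M_0$ and all $n$ (enlarging $K$ if necessary to include the trivial case $\phi_0 \equiv 1$). The result will follow once I establish $\sup_m |a_m^n| \leq K$ for every $n$, which I would prove by induction on $n$. The base case $n=0$ is immediate since $\phi_0\equiv 1$ gives $a_0^0=1$ and $a_m^0=0$ for $m\geq 1$. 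For the inductive step I split on $m$: when $m \leq M_0$ the bound holds by the choice of $K$, and when $m > M_0 \geq 2N+3$ Lemma \ref{inequality} applies and yields
\[
|a_m^{n+1}| \leq \frac{C}{(m-N)(m-N-1)} \max_{2 \leq k \leq 2N+2} |a^n_{m-k}| \leq 1 \cdot K = K,
\]
where the prefactor is $\leq 1$ by the choice of $M_0$ and the max is $\leq K$ by the inductive hypothesis. This closes the induction and gives the claimed uniform bound.

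There is no substantive technical obstacle; the only step requiring any care is choosing $M_0$ so that it simultaneously exceeds $2N+2$ (so Lemma \ref{inequality} is applicable, and all the shifted indices $m-k$ remain non-negative) and forces the prefactor to be contractive. The main conceptual point is the recognition that Lemma \ref{inequality} acts as a contraction outside a finite window of indices, so the previous lemma's bound on that window automatically propagates to a uniform bound on the entire double-indexed array $\{a_m^n\}$.
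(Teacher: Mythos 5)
Your proposal is correct and follows essentially the same route as the paper: choose a threshold beyond which the prefactor in Lemma \ref{inequality} is non-amplifying, bound the finite window of indices below the threshold by the previous lemma, and propagate the bound by induction (you induct on $n$ where the paper effectively inducts on $m$, but this is a cosmetic difference). Your version is, if anything, slightly more carefully written than the paper's, which leaves the induction implicit.
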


\begin{proof}
Choose $\mu\in\mathbb{N}$ such that $\frac{M}{(m-N)(m-N-1)}<\frac{1}{2}$ for $m\geq\mu$. Let $B$ represent the bound on $\{|a_m^n|:m\leq\mu,n\in\mathbb{N}\}$. Then Lemma \ref{inequality} implies that for all $n$ we have $|a_{\mu+2}^n|\leq B$, and $|a_{\mu+1}^n|\leq B$. Thus we can proceed inductively to show $|a_m^n|\leq B$ for all $n$ and $m$.
\end{proof}

The previous lemmas allow us to iterate our main inequality to get the following lemma. Before stating the result we define
\begin{equation}\label{FunnyFactorial}
  k\tilde{!}:=\begin{cases}
                k, & \mbox{if } k\leq 2N+2 \\
                k(k-1)\cdot (k-(2N+2))\tilde{!}, & \mbox{otherwise}.
              \end{cases}
\end{equation}

\begin{lemma}\label{comp1}
If $m=(2N+2)d +\rho$, where $\rho\in \{1,...,2N+2\}$, then
\[
|a_n^m|\leq B\frac{C^d}{(m-N)\tilde{!}}\leq  B\frac{C^{\frac{m}{2N+2}}}{(m-N)\tilde{!}}.
\]
\end{lemma}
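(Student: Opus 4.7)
The plan is to prove the bound by strong induction on $m$. For the base case, treat all $m$ with $d=0$, i.e. $m\in\{1,\ldots,2N+2\}$, as an initial regime. Since $\{|a_m^n|:m\leq 2N+2,\,n\in\mathbb{N}\}$ is a finite bounded set (by the preceding lemmas), we may enlarge $B$ and take $C\geq 1$ so that $|a_m^n|\leq B/(m-N)\tilde{!}=BC^0/(m-N)\tilde{!}$ for every such $m$ (interpreting $(m-N)\tilde{!}$ trivially for the finitely many problematic values $m\leq N$).

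For the inductive step, fix $m>2N+2$, write $m=(2N+2)d+\rho$ with $d\geq 1$, and assume the bound holds for all smaller indices. Lemma \ref{inequality} gives
\[
|a_m^{n+1}|\leq \frac{C}{(m-N)(m-N-1)}\max_{j\in\{2,\ldots,2N+2\}}|a_{m-j}^n|,
\]
and the inductive hypothesis bounds each $|a_{m-j}^n|$ by $BC^{d_j}/(m-j-N)\tilde{!}$, where $d_j$ is the quotient associated with $m-j$. The algebraic backbone of the argument is the recursive identity from \eqref{FunnyFactorial},
\[
(m-N)\tilde{!}=(m-N)(m-N-1)\cdot(m-3N-2)\tilde{!},
\]
which exactly matches the factor $(m-N)(m-N-1)$ appearing in the denominator of Lemma \ref{inequality}. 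Consequently, if we can show
\[
\max_{j}\,\frac{C^{d_j}}{(m-j-N)\tilde{!}}\;\leq\;\frac{C^{d-1}}{(m-3N-2)\tilde{!}},
\]
then multiplying through by $C/[(m-N)(m-N-1)]$ and applying the identity yields $|a_m^{n+1}|\leq BC^d/(m-N)\tilde{!}$, closing the induction. The final inequality in the statement then follows from $d\leq m/(2N+2)$ together with $C\geq 1$.

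To verify the inner claim, note that $j=2N+2$ realizes equality: there $d_j=d-1$ and $m-j-N=m-3N-2$. For the remaining $j$, two cases arise. If $j\geq\rho$ then $d_j=d-1$, and since $\tilde{!}$ is increasing in its argument on its effective domain and $m-j-N\geq m-3N-2$, the required inequality is immediate. If $j<\rho$, one has $d_j=d$, and the ratio $(m-j-N)\tilde{!}/(m-3N-2)\tilde{!}$ must absorb an extra factor of $C$. This is the main obstacle: the ratio is a product of polynomial-in-$m$ factors that grows with $m$ and eventually dominates any fixed constant, but verifying it uniformly forces one to choose $C$ sufficiently large and to fold the finitely many small-$m$ exceptions back into the base case by inflating $B$. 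Once the inner claim is established, the rest is the telescoping computation indicated above.
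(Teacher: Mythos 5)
Your strategy is genuinely different from the paper's. The paper does not induct on $m$ at all: it fixes $m=(2N+2)d+\rho$, notes that $a_m^n\neq 0$ forces $n>d$ (via the degree bound of Lemma \ref{degree}), and then iterates Lemma \ref{inequality} exactly $d$ times in the superscript $n$. At each of the $d$ steps every index in the max-set drops by at least $2$ and at most $2N+2$, and the paper charges the denominator only for the uniform worst case $j=2N+2$; the product of these worst-case factors is precisely what builds $(m-N)\tilde{!}$ through the recursion \eqref{FunnyFactorial}, and the terminal max is killed by the global bound $B$ from the preceding lemma. Because the worst-case factor is the same at every step regardless of which $j$ actually occurs, the paper never has to compare values of $\tilde{!}$ at arguments lying in different residue classes mod $2N+2$, which is exactly where your argument gets stuck.

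The gap in your proposal is the case $j<\rho$ of your ``inner claim,'' which you flag but do not close, and your suggested remedy points in the wrong direction. There you need $C\,(m-3N-2)\tilde{!}\leq (m-j-N)\tilde{!}$; since $C$ is the fixed constant handed to you by Lemma \ref{inequality} (it comes from the coefficients $\alpha_{kj}$), enlarging $C$ makes this inequality strictly harder to satisfy, not easier. What actually saves the argument is that the ratio $(m-j-N)\tilde{!}/(m-3N-2)\tilde{!}$ is a product of factors of the form $\bigl(1+\delta/x_i\bigr)\bigl(1+\delta/(x_i-1)\bigr)$ with $\delta=2N+2-j\geq 1$ and $x_i$ running down an arithmetic progression of step $2N+2$, so its logarithm diverges like a harmonic sum and the ratio eventually exceeds any fixed $C$ --- but only for $m\geq M_0(C,N)$, and you must then absorb all $m<M_0$ (not merely $m\leq 2N+2$) into an enlarged base case by inflating $B$. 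None of this is carried out, and it is the crux of your induction; as written the proof is conditional on an unproved inequality. If you want to avoid this bookkeeping entirely, the paper's device of iterating in $n$ with a uniform worst-case denominator is the cleaner route.
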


\begin{proof}
For the given $m$ if $a_n^m\neq 0$, then we know the degree of $\phi_n$ is greater than $d(2N+2)$, so $n>d$. Thus we can iterate the inequality $d$ times as follows. Note that at each step we are applying \ref{inequality} to each element within the $\max\{...\}$ and then selecting the largest term.
\[
\begin{array}{lll}
  |a_m^n| & \leq & \frac{C}{(m-N)(m-N-1)}\max\{|a_{m-2}^{n-1},...,|a^{n-1}_{m-(2N+2)}|\} \\
   & \leq & \frac{C^2}{(m-N)(m-N-1)(m-(2N+2)-N)(m-(2N+2)-N-1)}\max\{|a_{m-4}^{n-2},...,|a^{n-2}_{m-(2N+2)}|\} \\
   & \hdots &  \\
   & \leq & \frac{C^d}{(m-N)\tilde{!}}\max\{|a_{m-2d}^{n-d},...,|a^{n-d}_{m-(2N+2)d}|\}\\
   &\leq & B\frac{C^d}{(m-N)\tilde{!}}\\
   &\leq & B\frac{C^{\frac{m}{2N+2}}}{(m-N)\tilde{!}}.
\end{array}
\]
\end{proof}

\begin{lemma}\label{comp2}
The series
\[
\sum_{m=2N+2}^{\infty}\frac{C^{\frac{m}{2N+2} }}{(m-N)\tilde{!}}
\]
converges.
\end{lemma}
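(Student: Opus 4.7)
The plan is to partition the series according to residue classes modulo $q:=2N+2$ and show that within each class the terms decay at least geometrically. The key observation is that the definition of $\tilde{!}$ conceals a clean ``two-at-a-time'' recursion that substitutes for an ordinary ratio.

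Concretely, the defining recursion \eqref{FunnyFactorial} gives, for every $k > q$,
\[
k\tilde{!} = k(k-1)\,(k-q)\tilde{!}.
\]
Setting $k = m-N$ (valid for $m > 3N+2$) and denoting the generic term of the series by $b_m := C^{m/q}/(m-N)\tilde{!}$, I compute
\[
\frac{b_m}{b_{m-q}} \;=\; \frac{C^{m/q}}{C^{(m-q)/q}}\cdot\frac{(m-N-q)\tilde{!}}{(m-N)\tilde{!}} \;=\; \frac{C}{(m-N)(m-N-1)} \;\longrightarrow\; 0 \quad\text{as}\ m\to\infty.
\]

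Next I split $\sum_{m\ge 2N+2} b_m$ into the $q$ arithmetic subseries indexed by the residue of $m$ modulo $q$. Fix $m^*$ so large that $C/[(m-N)(m-N-1)] < 1/2$ for all $m \ge m^*$; then along each residue class the tail is termwise dominated by a geometric series with ratio $1/2$, hence converges. Any finite initial segment (including the transitional range $2N+2 \le m \le 3N+2$, where the recursion above does not apply) contributes a finite amount. Since the full series is a finite sum of $q$ convergent subseries, it converges as claimed.

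The main obstacle is really notational rather than analytic: the piecewise recursive definition of $\tilde{!}$ obscures the fact that the natural ratio to examine is $b_m/b_{m-q}$ rather than $b_m/b_{m-1}$, and once that is noticed the result reduces to $q$ copies of the geometric series test. An alternative route would be to iterate the recursion to produce a lower bound of order $q^{2d}(d!)^{2}$ with $d=\lfloor (m-N)/q\rfloor$ and then compare $b_m$ directly to a term of $\sum 1/(d!)^2$; this also works but requires more bookkeeping and offers no conceptual gain over the ratio argument above.
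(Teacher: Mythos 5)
Your proof is correct, and it takes a genuinely different (if closely related) route from the paper's. The paper normalizes to $\sum_k D^k/k\tilde{!}$ and applies the ordinary ratio test to \emph{consecutive} terms, which forces it to unwind the recursive definition of $\tilde{!}$ into a telescoping product $\frac{k}{k+1}\frac{k-1}{k}\frac{k-3}{k-2}\cdots$ and then bound that product by $\frac{1}{k+1}$ --- a step that is somewhat delicate (and, as written there, rather schematic). You instead exploit the fact that the recursion $k\tilde{!}=k(k-1)(k-q)\tilde{!}$ with $q=2N+2$ relates terms $q$ apart, so the natural ratio is $b_m/b_{m-q}$, which the definition gives you \emph{exactly} as $C/[(m-N)(m-N-1)]$ in one line; splitting into the $q$ residue classes mod $q$ then reduces everything to $q$ geometric comparisons. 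What your approach buys is an exact, trivially computable ratio and no product manipulation; what the paper's buys is a single undecomposed application of the ratio test. You also correctly flag the range restriction $m>3N+2$ where the recursion kicks in and dispose of the transitional terms as a finite initial segment, a point the paper glosses over. One cosmetic remark: your limit statement ``$\to 0$'' is more than you need --- eventual boundedness by any fixed constant less than $1$ suffices, as your own choice of $m^*$ with ratio below $1/2$ already shows.
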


\begin{proof}
  Without loss of generality we consider
  \[
  \sum_{k=1}^{\infty}\frac{D^k}{k\tilde{!}},
  \]
  where $D$ is a positive constant.
  Consider
  \[
  \begin{array}{lll}
\frac{\frac{D^{k+1}}{(k+1)\tilde{!}}}{\frac{D^k}{k\tilde{!}}}&=&D\frac{k}{k+1}\frac{k-1}{k}\frac{k-3}{k-2}\frac{k-4}{k-3}\cdots \\
&\leq & D\frac{1}{k+1}\frac{k}{k}\frac{k-1}{k-2}\frac{k-3}{k-3}\frac{k-4}{k-5}\cdots\\
&\leq & D\frac{1}{k+1}.
\end{array}
  \]
 Hence we achieve convergence via the ratio test.

 We can now write
 \[
 \phi_n=\sum_{m=0}^{2n}a_m^nr^m+\sum_{m=2n}^{\infty}a_m^nr^m.
 \]
 The finite sum on the right is a partial sum of the Airy function solution to the test problem. This partial sum converges uniformly to the solution on any interval $[-R,R]$. The {\em tail} sum converges uniformly to $0$ on any $[-R,R]$ by a comparison argument using Lemmas \ref{comp1} and \ref{comp2}.
\end{proof}

\section{Concluding remarks}

We suggest several questions for future research. First, it should be straight forward to handle more general initial conditions via the principle of superposition. Then a nonhomogeneous term in the PDE can be considered. Second a more general class of operators, $Lu_{rr}-u_{tt}+V(r)u$ can be considered using similar techniques. Third, explore a more abstract situation where a Lagrange multiplier $\lambda$ is replaced with an approximation $\lambda_N$. Under what general conditions can convergence still be proved? Fourth, if $(\lambda_N)$ is a sequence of approximations for $\lambda$ such that $\lambda_N\rightarrow \lambda$, in some appropriate sense, then show that larger $N$ leads to faster convergence. Computational evidence for the last assertion was given in \cite{SiR}.

\end{document}